\numberwithin{equation}{section}
\newtheorem{thm}{Theorem}[section]
\newtheorem{lem}[thm]{Lemma}
\newtheorem{ques}[thm]{Question}
\newtheorem{cor}[thm]{Corollary}
\newtheorem{prop}[thm]{Proposition}
\newtheorem{rem}[thm]{Remark}
\newtheorem{defin}[thm]{Definition}
\newtheorem{defen rem}[thm]{Definition and Remark}
\newtheorem{exam}[thm]{Example}
\newtheorem{defen}[thm]{Definition}
\newcommand{\fa}{\mathfrak{a}}
\newcommand{\fb}{\mathfrak{b}}
\newcommand{\m}{\mathfrak{m}}
\newcommand{\N}{\mathbb{N}}
\newcommand{\Z}{\mathbb{Z}}
\newcommand{\depth}{\operatorname{depth}}
\newcommand{\pd}{\operatorname{pd}}
\newcommand{\reg}{\operatorname{reg}}
\newcommand{\Fdepth}{\operatorname{F-depth_L}}
\newcommand{\rad}{\operatorname{rad}}
\newcommand{\en}{\operatorname{end}}
\newcommand{\ara}{\operatorname{ara}}
\newcommand{\cd}{\operatorname{cd}}
\newcommand{\grade}{\operatorname{grade}}
\begin{document}
\bibliographystyle{amsplain}

\address{ }
\email{}
\author{Maryam Jahangiri}
\address{Faculty of Mathematical Sciences and Computer, Kharazmi University, Tehran,
Iran; and School of Mathematics, Institute for Research in Fundamental Sciences (IPM), P.O.
Box: 19395-5746, Tehran, Iran.}
\email{mjahangiri@ipm.ir, jahangiri.maryam@gmail.com }

\address{}
\email{}

\thanks{\today }
\subjclass[2000]{ 13D45, 13E10.
This research was in part supported by a grant from IPM (No. 90130111)}

\title[Castelnuovo-mumford regularity and cohomological dimension  ]
{Castelnuovo-mumford regularity and cohomological dimension}

\begin{abstract}

Let $R=\oplus_{i\in \N_0}R_n$ be a standard graded ring, $R_+ :=\oplus_{i\in \N}R_n$ be the irrelevant ideal of $R$ and $\fa_0$ be an ideal of $R_0$. In this paper, as a generalization of the concept of Castelnouvo-Mumford regularity $\reg(M)$ of a finitely generated graded $R$-module $M$, we define the regularity of $M$ with respect to $\fa_0+ R_+$, say $\reg_{\fa_0+ R_+}(M)$. And we study some relations of this new invariant with the classic one. To this end,
 we need to consider the cohomological dimension of some finitely generated $R_0$-modules. Also, we will express $\reg_{\fa_0+ R_+}(M)$ in terms of some invariants of the minimal graded  free resolution of $M$ and see that in a special case this invariant is independent of the choice of $\fa_0$.

\end{abstract}
\maketitle

\section{Introduction}
Let $R=\oplus_{i\in \N_0}R_n$ be a standard graded algebra; so that $R_0$ is a Noetherian ring and $R$ is a $\N_0$-graded $R_0$-algebra which can be generated by finitely many elements of $R_1$. Also, assume that $M$ is a finitely generated graded $R$-module. Then, it is well-known that for all $i\in \N_0$, $H^i_{R_+}(M)$ carries a natural grading  and that $H^i_{R_+}(M)_n =0$ for all $n\gg 0$ (see \cite[chap. 13]{BSH}).
An important concept concerning this fact, called Castelnouvo-Mumford regularity of $M$, is defined to be
$$\reg_{R_+}(M):= \sup\{\en(H^i_{R_+}(M))+ i| i\in \N_0\},$$
where for a graded $R$-module $X=\oplus_{i\in \Z}X_n$, $\en(X):= \sup\{n\in \Z| X_n\neq 0\}$ with $\sup\{\emptyset\}:= -\infty$.

In addition, according to \cite{JZ} (\cite {hyry}, \cite{sh} or \cite{trung}), if $(R_0, \m_0)$ is local and $\fa_0$ is an ideal of $R_0$ then $H^i_{\fa_0+ R_+}(M)_n= 0$ for all $i\in \N_0$ and all $n\gg 0$. There, they generalized the $a^*$-invariants of $M$, defined by $a^*(M):= \sup\{\en(H^i_{R_+}(M))| i\in \N_0\}$, to the $a^*$-invariants of $M$ with respect to $\fa_0+ R_+$
by $a^*_{\fa_0+ R_+}(M):= \sup\{\en(H^i_{\fa_0+ R_+}(M))| i\in \N_0\}(<\infty)$ and showed that these two invariants are actually equal, i.e.
$$a^*_{\fa_0+ R_+}(M)= a^*(M).$$

As a generalization of the concept $\reg_{R_+}(M)$ we define the regularity of $M$ with respect to the ideal $\fa_0+ R_+$ at and above level $k (\in \N_0)$  by
$$\reg_{\fa_0+ R_+}^k(M):= \sup\{\en(H^i_{\fa_0+ R_+}(M))+ i| i \geq k\}$$
(see \cite{h} and \cite{JK}) and we set $\reg_{\fa_0+ R_+}(M):= \reg_{\fa_0+ R_+}^0(M)$.

Now, in view of the above equality of $a^*$-invariants, it is natural to ask:
\begin{ques}\label{q}
Does the equality $\reg_{\fa_0+ R_+}(M)= \reg_{R_+}(M)$ hold?
\end{ques}
In Section 2 of this paper we consider this question and show that it is not the case in general. However, there are some relations between them in terms of some cohomological dimensions (see \ref{reg.cd} and \ref{r cm}). Also, using the graded minimal free resolution of $M$, we show that, in a special case, $\reg_{\fa_0+ R_+}(M)$ is independent of the choice of $\fa_0$ (see \ref{res}).

In Section 3, in view of the results in Section 2 concerning relations between cohomological dimensions and regularity, we study some cohomological dimension formulas.

Throughout the paper, unless otherwise stated, $R=\oplus_{i\in \N_0}R_n$ will denote a standard graded ring, $\fa_0$ an ideal of $R_0$ and $M$ a finitely generated graded $R$-module.
\section{ Castelnuovo-Mumford regularity}

The following example answer the question \ref{q} in a negative way.

\begin{exam}
Let $(R_0, \m_0)$ be local with $d:= \dim(R_0)> 0$ and consider $R_0$ as a graded $R$-module which is concentrated in degree $0$.Then for all $i\in \N_0$, in view of \cite[13.1.10]{BSH}, we have
\[ H^i_{\m_0+ R_+}(R_0)_n\cong H^i_{\m_0R}(R_0)_n\cong \left\lbrace
  \begin{array}{c l}
   H^i_{\m_0}(R_0), & n=0;\\
  0, & n\neq 0.
  \end{array}
\right. \]

Which implies that $\reg_{\m_0+ R_+}(R_0)= d$. Also,
\[ H^i_{R_+}(R_0)_n= \left\lbrace
  \begin{array}{c l}
   R_0, &i= 0= n;\\
 0, & otherwise.
  \end{array}
\right. \]
Therefore, $\reg_{R_+}(R_0)=0 < d= \reg_{\m_0+ R_+}(R_0).$
\end{exam}

\begin{defen}
The arithmetic rank of an ideal $\fa$ of $R$, denoted by $\ara(\fa)$, is defined to be
$min \{n\in \N_0| \exists x_1, \cdots, x_n\in R \ \text{such that} \ \rad(\fa)=\rad((x_1, \cdots, x_n))\}$.
\end{defen}

Although, according to the above example, $\reg_{\fa_0+ R_+}(M)\neq \reg_{R_+}(M)$ but we have the following relation between them.

\begin{prop}\label{reg}
$\reg_{\fa_0+ R_+}(M)\leq \reg_{R_+}(M)+ \ara(\fa_0)$.
\end{prop}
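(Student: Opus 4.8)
The plan is to compare the two local cohomology modules through the composite-functor spectral sequence, and to control every end by exploiting that $\fa_0$ lives in degree $0$. First I would set $s:=\ara(\fa_0)$ and choose $x_1,\dots,x_s\in R_0$ with $\rad(\fa_0)=\rad((x_1,\dots,x_s)R_0)$; since these generators lie in $R_0$ we also get $\rad(\fa_0 R)=\rad((x_1,\dots,x_s)R)$, and $\fa_0+R_+=\fa_0 R+R_+$ as ideals of $R$. The torsion functors satisfy $\Gamma_{\fa_0+R_+}=\Gamma_{\fa_0 R}\circ\Gamma_{R_+}$, and $\Gamma_{R_+}$ carries graded injectives to graded injectives, hence to $\Gamma_{\fa_0 R}$-acyclics; so I would invoke the Grothendieck spectral sequence in the category of graded modules with degree-preserving maps,
\[
E_2^{p,q}=H^p_{\fa_0 R}\big(H^q_{R_+}(M)\big)\Longrightarrow H^{p+q}_{\fa_0+R_+}(M).
\]

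The key step is the analysis of the $E_2$-page. Because $\rad(\fa_0 R)$ is generated by the degree-zero elements $x_1,\dots,x_s$, the \v{C}ech complex computing $H^\bullet_{\fa_0 R}(N)$ consists of localizations of $N$ at elements of $R_0$, with all differentials of degree $0$; extracting the degree-$n$ strand then gives, for every graded $R$-module $N$ and every $n\in\Z$, a natural isomorphism of $R_0$-modules $H^p_{\fa_0 R}(N)_n\cong H^p_{\fa_0}(N_n)$. Applying this with $N=H^q_{R_+}(M)$ yields two consequences: (i) $(E_2^{p,q})_n=0$ whenever $H^q_{R_+}(M)_n=0$, so that $\en(E_2^{p,q})\le \en(H^q_{R_+}(M))$; and (ii) $E_2^{p,q}=0$ for $p>s$, since $\cd_{\fa_0 R}\le s$ forces the vanishing of $H^p_{\fa_0}$ in degrees $p>s$.

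Finally I would push these bounds through the convergence. Each $E_\infty^{p,q}$ is a graded subquotient of $E_2^{p,q}$, hence $\en(E_\infty^{p,q})\le\en(E_2^{p,q})\le\en(H^q_{R_+}(M))$, and $H^i_{\fa_0+R_+}(M)$ carries a finite filtration whose quotients are the $E_\infty^{p,q}$ with $p+q=i$ and $0\le p\le s$. Since the end of a module is bounded by the maximum of the ends of the quotients in such a filtration, writing $i=p+q$ gives
\[
\en\big(H^i_{\fa_0+R_+}(M)\big)+i\le\max_{\substack{p+q=i\\0\le p\le s}}\big(\en(H^q_{R_+}(M))+q+p\big)\le\reg_{R_+}(M)+s,
\]
and taking the supremum over $i$ yields $\reg_{\fa_0+R_+}(M)\le\reg_{R_+}(M)+\ara(\fa_0)$.

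I expect the main obstacle to be point (i): one must be certain that applying $H^p_{\fa_0 R}$ cannot raise the top nonzero degree of a graded module. This is exactly what the degree-zero generation of $\fa_0 R$ guarantees, via the strand-wise isomorphism $H^p_{\fa_0 R}(N)_n\cong H^p_{\fa_0}(N_n)$; without the generators sitting in $R_0$ this control of ends would fail, so verifying that the arithmetic rank of $\fa_0$ is realized by elements of $R_0$ is the delicate point on which the whole estimate rests.
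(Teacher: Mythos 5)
Your proof is correct, but it takes a genuinely different route from the paper. You run the composite-functor spectral sequence $E_2^{p,q}=H^p_{\fa_0R}(H^q_{R_+}(M))\Rightarrow H^{p+q}_{\fa_0+R_+}(M)$ and control both coordinates of the $E_2$-page at once: the vanishing $E_2^{p,q}=0$ for $p>\ara(\fa_0)$ bounds the horizontal width, and the strand-wise isomorphism $H^p_{\fa_0R}(N)_n\cong H^p_{\fa_0}(N_n)$ (which is exactly \cite[13.1.10]{BSH}, used elsewhere in the paper) shows that applying $H^p_{\fa_0R}$ cannot raise ends; passing to $E_\infty$ and the finite filtration then gives the inequality in one shot. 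The paper instead argues by induction on $t=\ara(\fa_0)$, adding one degree-zero element $x$ at a time via the comparison exact sequence \cite[5.1.22]{BSH} relating $H^i_{x+R_+}(M)$ to $H^i_{R_+}(M)$ and $H^{i-1}_{R_+}(M)_x$, gaining at most $1$ in the regularity at each step. Your argument is more conceptual and makes visible why the loss is exactly the number of extra columns in the spectral sequence (it also meshes with the same spectral sequence used later in Proposition \ref{r cm}); the paper's is more elementary, avoiding spectral sequences entirely at the cost of an induction. All the ingredients you invoke are sound: $\Gamma_{\fa_0+R_+}=\Gamma_{\fa_0R}\circ\Gamma_{R_+}$ with $\Gamma_{R_+}$ preserving injectives, the degree-zero \v{C}ech complex giving the strand-wise identification, and the fact that the end of a module with a finite filtration is bounded by the maximum of the ends of its subquotients. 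The one point you rightly flag — that the arithmetic rank of $\fa_0$ is realized by elements of $R_0$ — is automatic here since $\fa_0$ is an ideal of $R_0$ and $\ara(\fa_0)$ is computed there; the paper makes the same choice.
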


\begin{proof}
Assume that $\ara(\fa_0)= t$ and $x_1, \cdots, x_t\in R_0$ such that $\rad(\fa_0)=\rad((x_1, \cdots, x_t))$. We use induction on $t$.
Let $t=1$ and $x:=x_1$. Then, in view of \cite[5.1.22]{BSH}, for all $n\in \Z$ and all $i\in \N_0$ there is an exact sequence
$$(H^{i-1}_{R_+}(M)_x)_{n-1-(i-1)}\longrightarrow H^i_{x+ R_+}(M)_{n-i}\longrightarrow H^i_{R_+}(M)_{n-i}$$
of $R_0$-modules.
For all  $n> \reg(M)+ 1$ and all $i\in \N_0$ we have $H^i_{R_+}(M)_{n-i}= 0= (H^{i-1}_{R_+}(M)_x)_{n-1-(i-1)}$.
So, the above exact sequence implies that for all $n> \reg(M)+ 1$, $H^i_{x+ R_+}(M)_{n-i}=0$ for all $i\in \N_0$. Therefore $\reg_{x+ R_+}(M)\leq \reg_{R_+}(M)+ 1$.
Which proves the claim in the case $t =1$.

Now, the same argument as used above completes the induction.
\end{proof}

\begin{defen rem}
The cohomological dimension of an $R$-module $X$ with respect to an ideal $\fa$ of $R$, which is an important invariant of $X$, denoted by $\cd_{\fa}(X)$ and is defined to be the last integer $i$ for which $H^i_{\fa}(X)\neq 0$.

In view of \cite[3.3.1]{BSH}, one has $\cd_{\fa}(X)\leq \ara(\fa)$.
\end{defen rem}

In the rest of this section we consider two special cases:

 \noindent\textbf{$\bullet$ The case where $R= R_0[x_1, \cdots, x_t]$ and $M= M_0[x_1, \cdots, x_t]$:}

 In this subsection we assume that $R= R_0[x_1, \cdots, x_t]$ is the standard graded polynomial ring over a Noetherian ring $R_0$, $M_0$ is an $R_0$-module and $M:= M_0[x_1, \cdots, x_t]$ is a polynomial module over $R$ which is graded in the usual way.

The following theorem, in conjunction with its two next results, shows that in order to study the regularity of some graded $R$-modules with respect to an ideal $\fa_0+ R_+$ it is worth to study the cohomological dimension
of some of its graded components with respect to $\fa_0$.

\begin{thm}\label{reg.cd}

$\reg_{\fa_0+ R_+}(M)= \cd_{\fa_0}(M_0).$
\end{thm}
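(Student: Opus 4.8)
The plan is to compute the modules $H^i_{\fa_0+R_+}(M)$ explicitly and then read off their graded ends. The starting observation is that $\fa_0+R_+=\fa_0R+R_+$ and that $\Gamma_{\fa_0R+R_+}=\Gamma_{\fa_0R}\circ\Gamma_{R_+}$, since an element is annihilated by a power of $\fa_0R+R_+$ precisely when it is annihilated both by a power of $\fa_0R$ and by a power of $R_+$. Because $R$ is Noetherian and $\Gamma_{R_+}$ sends (graded) injective $R$-modules to injective, hence $\Gamma_{\fa_0R}$-acyclic, modules, this composition yields a Grothendieck spectral sequence of graded modules $E_2^{p,q}=H^p_{\fa_0R}(H^q_{R_+}(M))\Longrightarrow H^{p+q}_{\fa_0+R_+}(M)$, which I would use to transfer the computation from $R_+$ to $\fa_0$.

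First I would analyse the inner cohomology $H^q_{R_+}(M)$. Since $x_1,\dots,x_t$ is an $M$-regular sequence generating $R_+$, these modules vanish for $q\neq t$, and a direct computation with the (extended) \v{C}ech complex on $x_1,\dots,x_t$ gives $H^t_{R_+}(M)\cong M_0\otimes_{R_0}H^t_{R_+}(R)$, using only right exactness of $M_0\otimes_{R_0}-$ applied to the top cohomology. Here $H^t_{R_+}(R)$ is the module of inverse polynomials: as a graded $R_0$-module its degree-$n$ component is free of finite rank $r_n$, with $r_n>0$ exactly for $n\leq -t$, $r_{-t}=1$, and $r_n=0$ for $n>-t$. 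Because only the row $q=t$ of the spectral sequence is nonzero, it collapses and I obtain the graded isomorphism $H^i_{\fa_0+R_+}(M)\cong H^{i-t}_{\fa_0R}\!\left(M_0\otimes_{R_0}H^t_{R_+}(R)\right)$ for every $i$.

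The next step is to compute the outer cohomology degree by degree. Since $\fa_0R$ is generated by elements of $R_0$, i.e. in degree $0$, the \v{C}ech complex computing $H^\bullet_{\fa_0R}$ is degree-preserving and splits into its homogeneous components; in degree $n$ it reduces to the \v{C}ech complex over $R_0$ applied to the $R_0$-module $(M_0\otimes_{R_0}H^t_{R_+}(R))_n\cong M_0^{\,r_n}$. Hence $H^i_{\fa_0+R_+}(M)_n\cong H^{i-t}_{\fa_0}(M_0)\otimes_{R_0}H^t_{R_+}(R)_n\cong H^{i-t}_{\fa_0}(M_0)^{\,r_n}$, which is nonzero precisely when $n\leq -t$ and $H^{i-t}_{\fa_0}(M_0)\neq 0$. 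Consequently $\en\!\left(H^i_{\fa_0+R_+}(M)\right)=-t$ whenever $H^{i-t}_{\fa_0}(M_0)\neq 0$ and equals $-\infty$ otherwise. Taking the supremum of $\en(H^i_{\fa_0+R_+}(M))+i$ and substituting $j=i-t$ turns it into $\sup\{\,j : H^j_{\fa_0}(M_0)\neq 0\,\}=\cd_{\fa_0}(M_0)$ (with the convention $\sup\emptyset=-\infty$ covering the degenerate cases), which is the asserted equality.

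I expect the main obstacle to be bookkeeping rather than a deep idea: one must verify the composite-functor identity and make sure the Grothendieck spectral sequence, its collapse, and all the subsequent identifications take place in the category of graded modules with degree-preserving maps. The two points requiring genuine care are the graded structure of $H^t_{R_+}(R)$ (top nonvanishing degree $-t$, one-dimensional there) and the degreewise splitting of $H^\bullet_{\fa_0R}$; together these pin the end of every nonvanishing $H^i_{\fa_0+R_+}(M)$ to the single value $-t$, and this is exactly what produces the clean shift by $t$ relating $\reg_{\fa_0+R_+}(M)$ to $\cd_{\fa_0}(M_0)$.
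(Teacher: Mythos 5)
Your proposal is correct and follows essentially the same route as the paper: the paper likewise reduces to the isomorphisms $H^i_{\fa_0+R_+}(M)_n\cong H^{i-t}_{\fa_0}(H^t_{R_+}(M)_n)$ (citing Nagel--Schenzel and the degreewise splitting of $H^\bullet_{\fa_0 R}$ in place of your explicit composite-functor spectral sequence and \v{C}ech computations) and then identifies $H^t_{R_+}(M)$ with the inverse polynomial module $M_0[x_1^-,\dots,x_t^-]$ to pin every nonzero end at $-t$. Your write-up merely makes explicit the standard facts the paper cites, so the two arguments coincide in substance.
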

\begin{proof}
Since $x_1, \cdots, x_t$ is a regular sequence on $M$ so, in view of \cite[3.4]{NS} and \cite[2.1.9 and 13.1.10]{BSH}, for all $n\in \Z$ there are the following isomorphisms of $R_0$-modules
\[H^i_{\fa_0+ R_+}(M)_n\cong \left\lbrace
  \begin{array}{c l}
   H^{i- t}_{\fa_0}(H^t_{R_+}(M)_n), & i\geq t;\\
   0, & i<t.
  \end{array}
\right.\]
Now, for all $i\geq t$, according to \cite[12.4.1]{BSH}, one has the following isomorphisms of graded $R$-modules
$$H^{i- t}_{\fa_0R}(H^t_{R_+}(M))= H^{i- t}_{\fa_0R}(H^t_{R_+}(M_0[x_1, \cdots, x_t]))\cong H^{i- t}_{\fa_0R}(M_0[x_1^-, \cdots, x_t^-])\cong H^{i- t}_{\fa_0}(M_0)[x_1^-, \cdots, x_t^-],$$
where for an $R_0$-module $X_0$, $X_0[x_1^-, \cdots, x_t^-]$ denotes the module of inverse polynomials in $x_1, \cdots, x_t$ over $X_0$; so that it has a graded $R$-module structure such that, for $(i_1, \cdots, i_t)\in (-\N)^t$ and $1\leq s\leq t$,
\[x_s(x_1^{i_1} \cdots x_t^{i_t})= \left\lbrace
  \begin{array}{c l}
  x_1^{i_1} \cdots x_{s- 1}^{i_{s- 1}} x_s^{i_s+ 1}x_{s+ 1}^{i_{s+ 1}} \cdots  x_t^{i_t}& i_s< -1;\\
   0, & i_s= -1.
  \end{array}
\right.\]
Therefore, for all $i\geq t$, if $H^{i- t}_{\fa_0}(M_0)\neq 0$ then $\en(H^i_{\fa_0+ R_+}(M))= -t$. Which implies that $\reg_{\fa_0+ R_+}(M)= \cd_{\fa_0}(M_0)$.
\end{proof}

The following corollary, which is immediate by the above theorem and \cite[3.3.1]{BSH}, consider a case in which one has equality in \ref{reg}.

\begin{cor}
Assume that $a_0$ is generated by a regular sequence of length $n$ on $M_0$. Then, $\reg_{\fa_0+ R_+}(M)=
 \ara(\fa_0)= n$.
\end{cor}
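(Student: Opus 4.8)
The plan is to combine Theorem \ref{reg.cd} with the elementary two-sided estimate $\grade(\fa_0, M_0)\leq \cd_{\fa_0}(M_0)\leq \ara(\fa_0)$ and squeeze every quantity between two copies of $n$. Write $\fa_0=(y_1,\cdots,y_n)$, where $y_1,\cdots,y_n$ is the given $M_0$-regular sequence. Since $\fa_0$ is then generated by $n$ elements, the equality $\rad(\fa_0)=\rad((y_1,\cdots,y_n))$ exhibits $\ara(\fa_0)\leq n$, and hence, as recorded in the Definition and Remark above (via \cite[3.3.1]{BSH}), $\cd_{\fa_0}(M_0)\leq \ara(\fa_0)\leq n$.

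For the reverse bound I would invoke the grade. Because $y_1,\cdots,y_n$ is an $M_0$-regular sequence lying in $\fa_0$, one has $\grade(\fa_0,M_0)\geq n$. Recalling the cohomological characterization $\grade(\fa_0,M_0)=\inf\{i\in\N_0\mid H^i_{\fa_0}(M_0)\neq 0\}$ (valid since $\fa_0 M_0\neq M_0$ for a proper regular sequence, so the grade is finite and equals the first nonvanishing index), it follows that $H^i_{\fa_0}(M_0)=0$ for $i<n$ while the least index of nonvanishing cohomology is at least $n$. Consequently $\cd_{\fa_0}(M_0)\geq \grade(\fa_0,M_0)\geq n$.

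Putting the two bounds together yields $n\leq \grade(\fa_0,M_0)\leq \cd_{\fa_0}(M_0)\leq \ara(\fa_0)\leq n$, so all of these quantities coincide and equal $n$; in particular $\cd_{\fa_0}(M_0)=\ara(\fa_0)=n$. Finally, Theorem \ref{reg.cd} identifies $\reg_{\fa_0+R_+}(M)$ with $\cd_{\fa_0}(M_0)$, which we have just shown to be $n$, giving $\reg_{\fa_0+R_+}(M)=\ara(\fa_0)=n$, as claimed.

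As for difficulty, there is essentially no genuine obstacle here: the substance lies entirely in Theorem \ref{reg.cd}, and the corollary needs only the standard squeeze on $\cd_{\fa_0}(M_0)$. The single point to state carefully is the grade--local-cohomology identity together with the hypothesis $\fa_0 M_0\neq M_0$ that makes the grade finite; once that is in place, the conclusion is a purely formal chain of inequalities.
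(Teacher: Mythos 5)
Your proposal is correct and follows essentially the same route the paper intends: the corollary is stated as immediate from Theorem \ref{reg.cd} together with the bound $\cd_{\fa_0}(M_0)\leq \ara(\fa_0)\leq n$ from \cite[3.3.1]{BSH}, with the reverse inequality supplied by the grade of the regular sequence, exactly as you squeeze it. Your write-up merely makes explicit the lower-bound step (including the finiteness of the grade via $\fa_0 M_0\neq M_0$) that the paper leaves to the reader.
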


\begin{cor}
Assume that $(R_0, \m_0)$ is local and $M_0$ is Cohen-Macaulay. Then
$$\reg_{\fa_0+ R_+}(M)= \dim(M_0)- \inf\{i\in \N_0| \lim_{\stackrel{\longleftarrow}{n}}H^i_{\m_0}(\frac{M_0}{\fa_0^n  M_0})\neq 0\}.$$
\end{cor}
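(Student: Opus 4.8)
The plan is to reduce the statement to Theorem~\ref{reg.cd} together with a Matlis-duality computation of the formal local cohomology modules of $M_0$. By Theorem~\ref{reg.cd} we have $\reg_{\fa_0+R_+}(M)=\cd_{\fa_0}(M_0)$, so the assertion is equivalent to the purely $R_0$-theoretic identity
$$\cd_{\fa_0}(M_0)=\dim(M_0)-\inf\{\,i\in\N_0\mid \mathfrak{F}^i\neq 0\,\},\qquad \mathfrak{F}^i:=\varprojlim_n H^i_{\m_0}(M_0/\fa_0^nM_0).$$
The modules $\mathfrak F^i$ are the formal local cohomology modules of $M_0$ along $\fa_0$. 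First I would record that $\mathfrak F^i=0$ for $i>\dim(M_0)$ and, as will follow from the computation below, that $\mathfrak F^{\dim(M_0)-\cd_{\fa_0}(M_0)}\neq 0$, so that the infimum is a well-defined non-negative integer.

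Next I would pass to a convenient base ring. Since $\m_0$-adic completion is faithfully flat, commutes with $H^i_{\m_0}(-)$ on finitely generated modules and with the formation of $\fa_0^nM_0$, and preserves $\dim$, $\cd_{\fa_0}$ and the Cohen--Macaulay property, I may assume $(R_0,\m_0)$ complete. By Cohen's structure theorem I then write $R_0$ as a quotient of a complete regular local ring $S$ of dimension $n_0$; regarding $M_0$ as an $S$-module via the surjection, denoting by $\m_0$ also the maximal ideal of $S$, and replacing $\fa_0$ by its preimage in $S$ (which changes neither $\fa_0^nM_0$ nor the local cohomology of any $R_0$-module), all the quantities above are unchanged while the regularity of $S$ makes local duality transparent. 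Set $d:=\dim(M_0)$ and let $E=E_S(S/\m_0)$, $(-)^{\vee}=\operatorname{Hom}_S(-,E)$ be the Matlis duality functor.

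The heart of the proof is the chain of isomorphisms
$$\mathfrak F^i=\varprojlim_n H^i_{\m_0}(M_0/\fa_0^nM_0)\cong\Big(\varinjlim_n H^i_{\m_0}(M_0/\fa_0^nM_0)^{\vee}\Big)^{\vee}\cong\Big(\varinjlim_n \operatorname{Ext}^{\,n_0-i}_S(M_0/\fa_0^nM_0,S)\Big)^{\vee}\cong\big(H^{\,d-i}_{\fa_0}(M_0^{\dagger})\big)^{\vee},$$
where $M_0^{\dagger}:=\operatorname{Ext}^{\,n_0-d}_S(M_0,S)$ is the canonical dual of $M_0$. The first isomorphism holds for any inverse system of Artinian modules: each $H^i_{\m_0}(M_0/\fa_0^nM_0)$ is Artinian, hence Matlis reflexive, and $\operatorname{Hom}_S(-,E)$ turns a direct limit into an inverse limit. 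The second is Grothendieck's local duality over the regular ring $S$, applied termwise to the finitely generated modules $M_0/\fa_0^nM_0$. The third isomorphism is where the Cohen--Macaulay hypothesis enters: since $M_0$ is Cohen--Macaulay, $\pd_S M_0=n_0-d$ and $\mathbf{R}\!\operatorname{Hom}_S(M_0,S)$ is concentrated in cohomological degree $n_0-d$, equal there to $M_0^{\dagger}$; feeding this into the adjunction $\mathbf{R}\!\operatorname{Hom}_S(M_0\otimes^{\mathbf L}_S S/\fa_0^n,S)\simeq\mathbf{R}\!\operatorname{Hom}_S(S/\fa_0^n,\mathbf{R}\!\operatorname{Hom}_S(M_0,S))$ and the standard identification $\varinjlim_n\mathbf{R}\!\operatorname{Hom}_S(S/\fa_0^n,-)\simeq\mathbf{R}\Gamma_{\fa_0}(-)$ produces $H^{d-i}_{\fa_0}(M_0^{\dagger})$.

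I expect this third isomorphism to be the main obstacle. The difficulty is the passage from the derived tensor product $M_0\otimes^{\mathbf L}_S S/\fa_0^n$, which the adjunction naturally produces, to the ordinary quotient $M_0/\fa_0^nM_0$ that appears after local duality; concretely one must show that the higher $\operatorname{Tor}^S_k(M_0,S/\fa_0^n)$ contribute nothing to the direct limit, which is exactly where $\pd_S M_0=n_0-d$ is used and where the computation coincides with the formal-local-cohomology duality of Schenzel. Granting the displayed chain, the conclusion is immediate: Matlis duality is faithful, so $\mathfrak F^i\neq 0$ if and only if $H^{d-i}_{\fa_0}(M_0^{\dagger})\neq 0$, whence
$$\inf\{\,i\mid \mathfrak F^i\neq 0\,\}=d-\sup\{\,j\mid H^j_{\fa_0}(M_0^{\dagger})\neq 0\,\}=d-\cd_{\fa_0}(M_0^{\dagger}).$$
Finally $\Supp_S(M_0^{\dagger})=\Supp_S(M_0)$, and the cohomological dimension of a finitely generated module depends only on its support, so $\cd_{\fa_0}(M_0^{\dagger})=\cd_{\fa_0}(M_0)$. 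Rearranging yields the required identity $\cd_{\fa_0}(M_0)=d-\inf\{i\mid\mathfrak F^i\neq0\}$, and combining with Theorem~\ref{reg.cd} completes the proof; the extreme cases $\fa_0=0$ and $\fa_0=\m_0$ provide quick sanity checks that the formula is correctly normalized.
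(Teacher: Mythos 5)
Your argument is correct, and at the top level it is the same reduction the paper makes: apply Theorem \ref{reg.cd} to replace $\reg_{\fa_0+R_+}(M)$ by $\cd_{\fa_0}(M_0)$, so that everything rests on the identity $\cd_{\fa_0}(M_0)=\dim(M_0)-\inf\{i\in\N_0\mid \varprojlim_n H^i_{\m_0}(M_0/\fa_0^nM_0)\neq 0\}$ for a Cohen--Macaulay module. The difference is what happens next: the paper simply quotes this identity from \cite[4.1]{DA}, whereas you reprove it from scratch via the duality for formal local cohomology. Your chain of isomorphisms is sound: the passage to the completion and to a Cohen presentation $S\twoheadrightarrow R_0$ is harmless since each $H^i_{\m_0}(M_0/\fa_0^nM_0)$ is Artinian and all the invariants involved are insensitive to it; Matlis reflexivity and local duality over the regular ring $S$ give the first two isomorphisms; and the step you flag as the main obstacle --- discarding the higher Tor's --- does go through, because by Artin--Rees the inverse system $\{\operatorname{Tor}^S_q(S/\fa_0^n,M_0)\}_n$ is pro-zero for $q>0$, so after applying the contravariant $\operatorname{Ext}^{\bullet}_S(-,S)$ and passing to the (exact) direct limit these rows of the hyperext spectral sequence vanish, leaving only $\varinjlim_n\operatorname{Ext}^{n_0-i}_S(M_0/\fa_0^nM_0,S)\cong H^{d-i}_{\fa_0}(M_0^{\dagger})$. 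The closing step, $\cd_{\fa_0}(M_0^{\dagger})=\cd_{\fa_0}(M_0)$ because $\Supp(M_0^{\dagger})=\Supp(M_0)$ and cohomological dimension depends only on support, is exactly \cite[2.2]{dnt}, which the paper uses elsewhere. In short: you supply a self-contained proof of the published input that the paper invokes in one line; the content is the same, the economy is different.
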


\begin{proof}
The result follows from Theorem \ref{reg.cd} and \cite[4.1]{DA}.
\end{proof}

\begin{cor}
Assume that $R_0$ contains a field and $\fa_0$ is an ideal of $R_0$ which can be generated by monomials in an $R_0$-sequence. Then
$$\reg_{\fa_0+ R_+}(R)= \pd_{R_0}(R_0/ \fa_0)= \depth(R_0)- \depth(R_0/ \fa_0).$$
\end{cor}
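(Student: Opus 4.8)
The plan is to read the statement off Theorem~\ref{reg.cd} and then prove the two equalities entirely at the level of the base ring $R_0$: the first by Lyubeznik's analysis of ideals generated by monomials in a regular sequence, and the second by the Auslander--Buchsbaum formula. Throughout I take $(R_0,\m_0)$ to be local, as is needed for $\depth$ and for Auslander--Buchsbaum to be available.

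First I would specialize Theorem~\ref{reg.cd} to the case at hand. Since $R=R_0[x_1,\dots,x_t]$ we have $M=R$, hence $M_0=R_0$, and the theorem gives at once
$$\reg_{\fa_0+R_+}(R)=\cd_{\fa_0}(R_0).$$
It therefore remains to prove the two ring-theoretic equalities $\cd_{\fa_0}(R_0)=\pd_{R_0}(R_0/\fa_0)$ and $\pd_{R_0}(R_0/\fa_0)=\depth(R_0)-\depth(R_0/\fa_0)$.

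For the first equality, write $\fa_0=(m_1,\dots,m_r)$ with each $m_j$ a monomial in a fixed $R_0$-sequence $y_1,\dots,y_s$. The key point is that the regular-sequence hypothesis makes $m_1,\dots,m_r$ behave like monomials in independent variables over a field: the Taylor complex on $m_1,\dots,m_r$ is a finite free resolution of $R_0/\fa_0$ over $R_0$, so in particular $\pd_{R_0}(R_0/\fa_0)<\infty$, and the modules $H^i_{\fa_0}(R_0)$ can be read off from the \v{C}ech (Koszul) complex on $m_1,\dots,m_r$. Matching the top nonvanishing index of this complex with $\pd_{R_0}(R_0/\fa_0)$ is precisely Lyubeznik's theorem on the local cohomology of ideals generated by monomials in an $R$-sequence, and it is exactly at this step that the hypotheses that $R_0$ contains a field and that the generators are monomials in a regular sequence are essential, since the comparison passes through a combinatorial description of $H^i_{\fa_0}(R_0)$. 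I would invoke that theorem directly rather than reprove it.

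Finally, with $\pd_{R_0}(R_0/\fa_0)<\infty$ in hand, the Auslander--Buchsbaum formula gives $\pd_{R_0}(R_0/\fa_0)+\depth(R_0/\fa_0)=\depth(R_0)$, which is the second equality; chaining the three displays yields $\reg_{\fa_0+R_+}(R)=\pd_{R_0}(R_0/\fa_0)=\depth(R_0)-\depth(R_0/\fa_0)$. The main obstacle is the middle equality $\cd_{\fa_0}(R_0)=\pd_{R_0}(R_0/\fa_0)$: it carries both the finiteness of the projective dimension that Auslander--Buchsbaum needs and the genuinely delicate coincidence of the two invariants, and it is the only place where the special monomial-in-a-regular-sequence structure and the field hypothesis are used.
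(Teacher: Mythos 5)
Your proposal is correct and follows essentially the same route as the paper: the paper likewise reduces via Theorem~\ref{reg.cd} to the equality $\cd_{\fa_0}(R_0)=\pd_{R_0}(R_0/\fa_0)$ and delegates that to the known result on monomial ideals in $R$-sequences (it cites \cite[4.3]{ST}, which is the Sabzrou--Tousi treatment of the Lyubeznik theorem you invoke), with Auslander--Buchsbaum supplying the depth formula. Your write-up merely makes explicit the finiteness of projective dimension via the Taylor complex and the local hypothesis needed for Auslander--Buchsbaum, which the paper leaves implicit.
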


\begin{proof}
The claim can be proved using Theorem \ref{reg.cd} and \cite[4.3]{ST}.
\end{proof}

\begin{cor}
Assume that the base ring $(R_0, \m_0)$ is local. Then
$$\reg_{\fa_0+ R_+}(M_0)\geq\{\dim(M_0/\fb_0 M_0)| \fb_0 \ {\text is \ an \ ideal \ of \ }R_0 {\text \  and \ } \rad(\fa_0+ \fb_0)= \m_0\}.$$
\end{cor}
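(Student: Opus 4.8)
The plan is to reduce the inequality to a comparison of cohomological dimensions over $R_0$ and then to exploit that local cohomology of a finitely generated module over the local ring $(R_0,\m_0)$ does not vanish in top degree. First I would record, exactly as in the opening example, that when $M_0$ is regarded as a graded $R$-module concentrated in degree $0$ it is annihilated by $R_+$, so that \cite[13.1.10]{BSH} gives, for every $i\in\N_0$,
$$H^i_{\fa_0+R_+}(M_0)_n\cong\begin{cases}H^i_{\fa_0}(M_0),&n=0,\\0,&n\neq 0.\end{cases}$$
Hence $\en(H^i_{\fa_0+R_+}(M_0))$ equals $0$ when $H^i_{\fa_0}(M_0)\neq 0$ and equals $-\infty$ otherwise, so $\reg_{\fa_0+R_+}(M_0)=\cd_{\fa_0}(M_0)$. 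It therefore suffices to prove $\cd_{\fa_0}(M_0)\geq\dim(M_0/\fb_0 M_0)$ for every ideal $\fb_0$ of $R_0$ with $\rad(\fa_0+\fb_0)=\m_0$, and then pass to the supremum.

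Next I would fix such a $\fb_0$, set $N_0:=M_0/\fb_0 M_0$ and $d:=\dim N_0$ (if $N_0=0$ the claim is trivial, so assume $N_0\neq 0$). Since $\fb_0 N_0=0$, the Independence Theorem \cite[4.2.1]{BSH} identifies $H^i_{\fa_0}(N_0)$ and $H^i_{\m_0}(N_0)$ with the local cohomology of $N_0$ computed over $R_0/\fb_0$ with respect to $(\fa_0+\fb_0)/\fb_0$ and $\m_0/\fb_0$ respectively; because $\rad(\fa_0+\fb_0)=\m_0$, these two ideals of $R_0/\fb_0$ share the radical $\m_0/\fb_0$, and hence $H^i_{\fa_0}(N_0)\cong H^i_{\m_0}(N_0)$ for all $i$. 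Grothendieck's non-vanishing theorem applied to the finitely generated module $N_0$ over the local ring $R_0$ then gives $H^d_{\m_0}(N_0)\neq 0$, so that $\cd_{\fa_0}(N_0)\geq d$.

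Finally I would compare $\cd_{\fa_0}(N_0)$ with $\cd_{\fa_0}(M_0)$. As $N_0$ is a quotient of $M_0$ we have $\Supp(N_0)\subseteq\Supp(M_0)$, and the cohomological dimension of a finitely generated module with respect to a fixed ideal depends only on its support: a Gruson-type filtration presents $N_0$ through finitely many short exact sequences whose subquotients are covered by copies of $M_0$, and the associated long exact sequences in $H^\bullet_{\fa_0}$ force $\cd_{\fa_0}(N_0)\leq\cd_{\fa_0}(M_0)$. Stringing the three steps together yields $\reg_{\fa_0+R_+}(M_0)=\cd_{\fa_0}(M_0)\geq\cd_{\fa_0}(N_0)\geq d=\dim(M_0/\fb_0 M_0)$, and taking the supremum over admissible $\fb_0$ finishes the argument. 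The hard part will be this last step: the reduction to $\cd$ and the Independence-Theorem computation are formal, whereas the support-only dependence of $\cd_{\fa_0}$ is the single input that is not a routine manipulation of long exact sequences, so it is the point where I would have to invoke (or reprove via the Gruson filtration) the comparison $\cd_{\fa_0}(N_0)\leq\cd_{\fa_0}(M_0)$.
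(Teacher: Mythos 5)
Your argument is correct and follows essentially the same route as the paper: both reduce the claim to the chain $\cd_{\fa_0}(M_0)\geq \cd_{\fa_0}(M_0/\fb_0 M_0)=\cd_{\m_0}(M_0/\fb_0 M_0)=\dim(M_0/\fb_0 M_0)$, using the support-dependence of cohomological dimension (the paper cites \cite[6.1.4]{BSH}, which is exactly the Gruson-type result you describe) together with the independence theorem and Grothendieck non-vanishing. The only cosmetic difference is the first step: you identify $\reg_{\fa_0+R_+}(M_0)$ with $\cd_{\fa_0}(M_0)$ by a direct degree-$0$ computation via \cite[13.1.10]{BSH}, whereas the paper invokes Theorem \ref{reg.cd} for $M=M_0[x_1,\dots,x_t]$ --- both readings of the (slightly ambiguously stated) corollary lead to the same quantity.
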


\begin{proof}
Let $\fb_0$ be an ideal of $R_0$ such that $\rad(\fa_0+ \fb_0)= \m_0$. Then, using \cite[6.1.4]{BSH} and \cite[2.2]{dnt}, one has
$$\cd_{\fa_0}(M_0)\geq \cd_{\fa_0}(M_0/\fb_0 M_0)= \cd_{\m_0}(M_0/\fb_0 M_0)= \dim(M/\fb_0 M_0).$$
Now, the result follows from Theorem \ref{reg.cd}.
\end{proof}

\noindent\textbf{$\bullet$ The case of only one non vanishing local cohomology module:}

\begin{defin}
Following \cite{JR}, an $R$-module $N$ is called relative Cohen-Macaulay of rank $g$ with respect to an ideal $\fb$ of $R$ if
$g= \grade_{\fb}(N)= \cd_{\fb}(N)$, i.e. $H^i_{\fb}(N)\neq 0$ if and only if $i= g$.
\end{defin}

The following proposition studies $\reg_{\fa_0+ R_+}(M)$ when $M$ is relative Cohen-Macaulay with respect to $R_+$.

\begin{prop}\label{r cm}
Let $M$ be relative Cohen-Macaulay with respect to $R_+$ of rank $g$. Then $$\reg_{\fa_0+ R_+}(M)= \sup\{\cd_{\fa_0}(H^g_{R_+}(M)_n)+n| n\in \Z\}+ g.$$
\end{prop}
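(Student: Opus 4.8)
The plan is to reduce everything to a single, cleanly collapsing computation of $H^i_{\fa_0+ R_+}(M)$ in terms of $H:= H^g_{R_+}(M)$, mirroring the mechanism behind Theorem \ref{reg.cd}. Since $\fa_0+ R_+= \fa_0 R+ R_+$ and $\Gamma_{\fa_0 R+ R_+}= \Gamma_{\fa_0 R}\circ \Gamma_{R_+}$, the Grothendieck spectral sequence of this composite gives, for graded modules,
$$E_2^{p,q}= H^p_{\fa_0 R}(H^q_{R_+}(M))\Longrightarrow H^{p+q}_{\fa_0+ R_+}(M).$$
The relative Cohen--Macaulay hypothesis is exactly what forces degeneration: $H^q_{R_+}(M)= 0$ for every $q\neq g$, so only the row $q= g$ survives and the sequence collapses to graded isomorphisms
$$H^i_{\fa_0+ R_+}(M)\cong H^{i- g}_{\fa_0 R}(H),\qquad i\in \N_0,$$
where the right-hand side vanishes for $i< g$. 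This is the natural extension of the isomorphism used in \ref{reg.cd}, where $M= M_0[x_1, \cdots, x_t]$ is relative Cohen--Macaulay of rank $t$ with respect to $R_+$; the only input needed is that $\Gamma_{R_+}$ sends graded injectives to $\Gamma_{\fa_0 R}$-acyclics, which is standard.

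The second step is to pass to graded components. Writing $\rad(\fa_0)= \rad((a_1, \cdots, a_s))$ with $a_1, \cdots, a_s\in R_0$, the extended \v{C}ech complex computing $H^\bullet_{\fa_0 R}(H)$ has terms that are localizations of $H$ at the degree-$0$ elements $a_i$; localizing at a degree-$0$ element preserves the grading, so in each degree $n$ this complex is exactly the \v{C}ech complex of the $R_0$-module $H_n$ with respect to $a_1, \cdots, a_s$. Taking cohomology commutes with taking the $n$-th graded piece, whence
$$H^j_{\fa_0 R}(H)_n\cong H^j_{\fa_0}(H_n)\qquad (n\in \Z).$$
Combined with the first step this yields $H^i_{\fa_0+ R_+}(M)_n\cong H^{i- g}_{\fa_0}(H_n)$ for all $i, n$; this identification should also be citable directly from the graded base-change results of \cite[13.1.10]{BSH}.

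Finally I would feed this into the definition of regularity and compute the supremum. By the displayed isomorphism, $H^i_{\fa_0+ R_+}(M)_n\neq 0$ exactly when $i\geq g$ and $H^{i- g}_{\fa_0}(H_n)\neq 0$, so
$$\reg_{\fa_0+ R_+}(M)= \sup\{\, n+ i\mid i\geq g,\ n\in \Z,\ H^{i- g}_{\fa_0}(H_n)\neq 0\,\}.$$
Grouping by $n$ and setting $j= i- g\geq 0$, the inner supremum $\sup\{\, n+ g+ j\mid H^j_{\fa_0}(H_n)\neq 0\,\}$ is increasing in $j$, hence attained at the largest admissible index $j= \cd_{\fa_0}(H_n)$ (with the convention that an empty index set contributes $-\infty$, consistent with $H_n= 0$). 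Thus the inner supremum equals $n+ g+ \cd_{\fa_0}(H_n)$, and taking the supremum over $n$ gives
$$\reg_{\fa_0+ R_+}(M)= \sup\{\, \cd_{\fa_0}(H^g_{R_+}(M)_n)+ n\mid n\in \Z\,\}+ g,$$
as claimed.

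I expect the only genuinely delicate points to be the justification that the spectral sequence degenerates to an honest graded isomorphism and the degreewise identification $H^j_{\fa_0 R}(H)_n\cong H^j_{\fa_0}(H_n)$; once these are in place the manipulation of the double supremum is routine, the key observation being simply that $n+ i$ is monotone in $i$, so that the top of the $\fa_0$-cohomology of each component $H_n$ is what controls the regularity.
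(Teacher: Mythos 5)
Your proposal is correct and follows essentially the same route as the paper: the composite-functor (Grothendieck) spectral sequence $H^i_{\fa_0 R}(H^j_{R_+}(M))\Rightarrow H^{i+j}_{\fa_0+R_+}(M)$, its collapse to graded isomorphisms $H^{i+g}_{\fa_0+R_+}(M)\cong H^i_{\fa_0 R}(H^g_{R_+}(M))$ under the relative Cohen--Macaulay hypothesis, the degreewise identification $H^i_{\fa_0 R}(-)_n\cong H^i_{\fa_0}((-)_n)$ (which the paper simply cites as \cite[13.1.10]{BSH} and you additionally justify via the \v{C}ech complex), and the same rearrangement of the double supremum. The only difference is that you spell out the acyclicity input and the \v{C}ech-complex argument that the paper leaves to the cited references.
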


\begin{proof}
Consider the Grothendieck spectral sequence $E_2^{i, j}= H^{i}_{\fa_0R}(H^{j}_{R_+}(M))\underset{i}{\Rightarrow}H^{i+ j}_{\fa_0+ R_+}(M)$. Since $M$ is relative Cohen-Macaulay with respect to $R_+$ of rank $g$, so $H^j_{R_+}(M)= 0$, for all $j\neq g$. Therefore, in view of the concept of convergence of spectral sequences, we have $H^{i}_{\fa_0+ R_+}(M)= 0$ for all $i< g$ and there are homogenous isomorphisms
$$H^{i}_{\fa_0R}(H^{g}_{R_+}(M))= E_2^{i, g}\cong E_{\infty}^{i, g}\cong H^{i+ g}_{\fa_0+ R_+}(M)$$
of graded $R$-modules for all $i\in \N_0$.

So, using \cite[13.1.10]{BSH}, we have
\begin{eqnarray*}
\reg_{\fa_0+ R_+}(M) &= & \sup \{ \en(H^{i}_{\fa_0+ R_+}(M))+ i | i\in \N_0\}\\
        &= & \sup \{ \en(H^{i+ g}_{\fa_0+ R_+}(M))+ i+ g | i\in \N_0\}\\
        &= & \sup \{ \en(H^{i}_{\fa_0R}(H^{g}_{R_+}(M)))+ i+ g | i\in \N_0\}\\
        &= & \sup \{ \sup \{n\in \Z | H^{i}_{\fa_0}(H^{g}_{R_+}(M)_n)\neq 0\}+ i | i\in \N_0\}+ g.
\end{eqnarray*}
Now, it is straightforward to see that
$$\sup \{ \sup \{n\in \Z | H^{i}_{\fa_0}(H^{g}_{R_+}(M)_n)\neq 0\}+ i | i\in \N_0\}= \sup\{\cd_{\fa_0}(H^g_{R_+}(M)_n)+n| n\in \Z\},$$
which proves the claim.
\end{proof}

In Proposition \ref{reg} we find a general relation between the invariants $\reg_{R_+}(M)$ and $\reg_{\fa_0+ R_+}(M)$. The following corollary provides another one in a special case.
A similar bound was obtained in \cite[3.4]{h}, in the case where $R$ is Cohen-Macaulay.

\begin{cor}
Let the situation be as above. Also, assume that the base ring $(R_0, \m_0)$ is local. Then

(i) $\reg_{\m_0+ R_+}(M)= \sup\{\dim_{R_0}(H^d_{R_+}(M)_n)+n| n\in \Z\}+ d\leq \dim(R_0)+d$,
where $d:= \dim(M/\m_0 M)$.

(ii) $\reg_{R_+}(M)\leq \reg_{\fa_0+ R_+}(M)$ for any ideal $\fa_0$ of $R
_0$.
\end{cor}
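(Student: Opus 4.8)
The plan is to obtain both parts as specializations of Proposition \ref{r cm} to $\fa_0=\m_0$, after replacing the cohomological dimension over $(R_0,\m_0)$ by Krull dimension. Two preliminary facts are needed. First, for every $n$ the component $H^g_{R_+}(M)_n$ is a finitely generated $R_0$-module, so Grothendieck's vanishing and non-vanishing theorems give $\cd_{\m_0}(H^g_{R_+}(M)_n)=\dim_{R_0}(H^g_{R_+}(M)_n)$. Second, I would check that the rank $g$ equals $d=\dim(M/\m_0M)$: the top non-vanishing index of $H^{\bullet}_{R_+}(M)$ is governed by the closed fibre $R/\m_0R$, a standard graded algebra over the field $R_0/\m_0$, over which top local cohomology computes Krull dimension, so that $\cd_{R_+}(M)=\dim(M/\m_0M)$; since $M$ is relative Cohen--Macaulay with respect to $R_+$, its unique non-vanishing index $g$ must be this top one, whence $g=d$.

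Granting these, Proposition \ref{r cm} with $\fa_0=\m_0$ reads
\[\reg_{\m_0+R_+}(M)=\sup\{\cd_{\m_0}(H^g_{R_+}(M)_n)+n\mid n\in\Z\}+g=\sup\{\dim_{R_0}(H^d_{R_+}(M)_n)+n\mid n\in\Z\}+d,\]
which is the equality asserted in (i). For the bound $\le\dim(R_0)+d$ I would estimate each summand by $\dim_{R_0}(H^d_{R_+}(M)_n)\le\dim(R_0)$, valid for any finitely generated $R_0$-module.

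The step I expect to be the main obstacle is controlling which degrees $n$ actually contribute to the supremum: to conclude $\sup_n\{\dim_{R_0}(H^d_{R_+}(M)_n)+n\}\le\dim(R_0)$ one needs the components attaining the full dimension $\dim(R_0)$ to sit in non-positive degrees, i.e. an inequality $\dim_{R_0}(H^d_{R_+}(M)_n)+n\le\dim(R_0)$ for all $n$. This is where the non-positivity of the top degree $\en(H^d_{R_+}(M))$ must enter, exactly as in the inverse-polynomial description exploited in Theorem \ref{reg.cd} and in the Cohen--Macaulay situation of \cite[3.4]{h}; with that input the bound follows. I would flag this degree estimate as the delicate point, since without such a normalization a shift of $M$ can push the full-dimensional components into arbitrarily high positive degrees.

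Part (ii) is free of this difficulty. Again by Proposition \ref{r cm}, for any proper ideal $\fa_0$ of $R_0$ (so $\fa_0\subseteq\m_0$) and any $n$ with $H^g_{R_+}(M)_n\ne0$ one has $\cd_{\fa_0}(H^g_{R_+}(M)_n)\ge0$, because a nonzero finitely generated $R_0$-module is supported at $\m_0\in V(\fa_0)$. Taking $n=\en(H^g_{R_+}(M))$ then gives $\sup_n\{\cd_{\fa_0}(H^g_{R_+}(M)_n)+n\}\ge\en(H^g_{R_+}(M))$, and hence $\reg_{\fa_0+R_+}(M)\ge\en(H^g_{R_+}(M))+g=\reg_{R_+}(M)$, the final equality holding because $H^i_{R_+}(M)=0$ for $i\ne g$.
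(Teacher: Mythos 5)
Your treatment of the equality in (i) and of part (ii) is correct and follows essentially the paper's own (very terse) route: the paper just cites Proposition \ref{r cm} together with a result of Brodmann, and the details you supply are exactly the ones needed, namely the finite generation of the components $H^g_{R_+}(M)_n$ over $R_0$, Grothendieck vanishing and non-vanishing to convert $\cd_{\m_0}$ into Krull dimension, and the identification $g=\cd_{R_+}(M)=\dim(M/\m_0M)=d$. Your argument for (ii) --- a nonzero finitely generated module over the local ring $R_0$ has $\cd_{\fa_0}\geq 0$ because its support meets $V(\fa_0)$, so the supremum in Proposition \ref{r cm} is at least $\en(H^g_{R_+}(M))$ --- is the intended content of the paper's phrase ``follows from the concept of $\reg_{R_+}(M)$''.

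The step you flagged in the inequality of (i) is, however, a genuine gap and cannot be closed: the estimate $\dim_{R_0}(H^d_{R_+}(M)_n)+n\leq\dim(R_0)$ fails in general, and so does the stated bound $\reg_{\m_0+R_+}(M)\leq\dim(R_0)+d$. For a concrete counterexample take $R_0=k$ a field (so $\m_0=0$ and $\reg_{\m_0+R_+}=\reg_{R_+}$) and $M=R=k[x,y,z]/(f)$ with $f$ a form of degree $e\geq 4$; then $M$ is relative Cohen--Macaulay with respect to $R_+$ of rank $d=\dim(M/\m_0M)=2$, while $\reg_{R_+}(M)=e-1>2=\dim(R_0)+d$. (A degree shift $M(k)$ of any relative Cohen--Macaulay module produces the same failure over an arbitrary local $R_0$, as you anticipated.) What your estimate $\dim_{R_0}(H^d_{R_+}(M)_n)\leq\dim(R_0)$ actually yields, combined with $n\leq\en(H^d_{R_+}(M))$, is the bound $\reg_{\m_0+R_+}(M)\leq\dim(R_0)+\en(H^d_{R_+}(M))+d=\dim(R_0)+\reg_{R_+}(M)$, which is the inequality consistent with the paper's comparison to \cite[3.4]{h} and which coincides with the stated one precisely when $\en(H^d_{R_+}(M))\leq 0$. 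So you should either prove this weaker, correct bound, or add the normalization $\en(H^d_{R_+}(M))\leq 0$ as a hypothesis; as written the inequality in (i) is not a theorem, and the paper's one-line proof does not address the point you identified.
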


\begin{proof}
Considering Proposition \ref{r cm}, (i) follows from \cite[2.3]{B} and (ii) follows from the concept of $\reg_{R_+}(M)$.
\end{proof}

In the following theorem we are going to express $\reg_{\fa_0+ R_+}(M)$ in terms of some invariants of the minimal graded free resolution of $M$. As we will see, in a special case, $\reg_{\fa_0+ R_+}(M)$ is independent of the choice of $\fa_0$!

\begin{thm}\label{res}
Let $(R_0, \m_0)$ be local and $M$ and $R$ be relative Cohen-Macaulay with respect to $R_+$ of the same rank with $p:=\pd_R(M)< \infty$. Also, assume that $\reg_{\fa_0+ R_+}(R)= 0$ and let
$$0 \longrightarrow \oplus_{i=1}^{n_p}R(a_{p}^{i})\stackrel{d_p}{\longrightarrow}\oplus_{i=1}^{n_{p-1}}R(a_{p-1}^{i})\stackrel{d_{p-1}}{\longrightarrow}\cdots \stackrel{d_1}{\longrightarrow} \oplus_{i=1}^{n_0}R(a_{0}^{i})\stackrel{d_0}{\longrightarrow} M\longrightarrow 0$$
be the minimal graded free resolution of $M$. Then
$$\reg_{\fa_0+ R_+}(M)= \max_{i= 0}^{p}\big(-\min_{j=1}^{n_i}a_{i}^{j}-i\big).$$
\end{thm}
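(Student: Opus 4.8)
The plan is to reduce the computation of $\reg_{\fa_0+R_+}(M)$ to the right-hand side by applying the degree-shift relation for regularity to the given free resolution, and then to use the hypothesis $\reg_{\fa_0+R_+}(R)=0$ together with the relative Cohen--Macaulay assumption to control how regularity propagates along the complex. The key observation is that each free module $R(a_i^j)$ satisfies $\reg_{\fa_0+R_+}(R(a_i^j)) = \reg_{\fa_0+R_+}(R) - a_i^j = -a_i^j$, since shifting the grading by $a_i^j$ shifts every $\en(H^\ell_{\fa_0+R_+}(-))$ by $-a_i^j$. Hence for the direct sum we have $\reg_{\fa_0+R_+}\big(\oplus_{j=1}^{n_i}R(a_i^j)\big) = \max_{j=1}^{n_i}(-a_i^j) = -\min_{j=1}^{n_i}a_i^j$.

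First I would establish that the hypotheses force $M$ to be relative Cohen--Macaulay with respect to $\fa_0+R_+$ as well, so that, as in the proof of Proposition~\ref{r cm}, the Grothendieck spectral sequence $E_2^{s,t}=H^s_{\fa_0 R}(H^t_{R_+}(M))\Rightarrow H^{s+t}_{\fa_0+R_+}(M)$ collapses to the single row $t=g$, where $g$ is the common rank of $M$ and $R$. Since $R$ is also relative Cohen--Macaulay of the same rank $g$, the same collapse holds for each free module, and the assumption $\reg_{\fa_0+R_+}(R)=0$ becomes a clean normalization: it pins down $\sup\{\cd_{\fa_0}(H^g_{R_+}(R)_n)+n\mid n\in\Z\}+g=0$, which is exactly what is needed to compute $\reg_{\fa_0+R_+}(R(a_i^j))=-a_i^j$ without residual correction terms.

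Next I would break the resolution into short exact sequences $0\to K_{i}\to F_i\to K_{i-1}\to 0$ (with $F_i=\oplus_j R(a_i^j)$, $K_{-1}=M$, $K_{p}=F_p$) and track regularity through the induced long exact sequences in local cohomology, or equivalently use the standard spectral-sequence/iterated-syzygy argument that gives, for the ``regularity at and above a level'' invariant, the inequality $\reg^k_{\fa_0+R_+}(M)\le \max_i\big(\reg_{\fa_0+R_+}(F_i)+\text{(shift by }i)\big)$ together with a matching lower bound from minimality. The degree shift by $i$ along the resolution is what converts $\reg_{\fa_0+R_+}(F_i)=-\min_j a_i^j$ into the term $-\min_j a_i^j - i$, and taking the maximum over $i$ yields the stated formula. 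The collapse to one cohomological row is precisely what makes the lower bound attainable: with only the row $t=g$ contributing, the contributions of distinct homological degrees $i$ cannot cancel, so the comparison maps forced by minimality (the entries of $d_i$ lying in $\fa_0+R_+$, hence in the maximal homogeneous ideal) prevent any drop below the maximum.

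The main obstacle I expect is the lower bound, i.e.\ showing that the maximum is actually achieved rather than merely bounding $\reg_{\fa_0+R_+}(M)$ from above. Upper bounds follow formally from the long exact sequences, but to see that no cancellation occurs one must use minimality of the resolution in an essential way: the fact that $d_i(F_i)\subseteq (\fa_0+R_+)F_{i-1}$ ensures that the connecting maps in cohomology cannot kill the top-degree classes coming from $H^g_{\fa_0+R_+}(F_i)$. Making this precise will require either a careful induction that simultaneously controls the $\en$ of the relevant cohomology modules at each stage, or a direct spectral-sequence degeneration argument exploiting that all modules in sight are relative Cohen--Macaulay of the same rank $g$, so that the hyper-cohomology spectral sequence of the resolution also concentrates in a single row and hence degenerates.
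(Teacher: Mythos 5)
Your overall skeleton matches the paper's: both arguments rest on $\reg_{\fa_0+R_+}(R(a))=-a$ (from the normalization $\reg_{\fa_0+R_+}(R)=0$) and on peeling off the resolution via the syzygy exact sequences, with the upper bound coming formally from the long exact sequence of local cohomology. But the lower bound, which you correctly single out as the crux, is left as a genuine gap, and the mechanism you propose for it is not the one that works. First, a factual slip: minimality of the resolution places the entries of $d_i$ in the graded maximal ideal $\m_0+R_+$, not in $\fa_0+R_+$ for an arbitrary ideal $\fa_0$, so the non-cancellation argument you sketch (``connecting maps cannot kill top-degree classes because $d_i(F_i)\subseteq(\fa_0+R_+)F_{i-1}$'') starts from a false premise. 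Second, the hypercohomology spectral sequence of $F_\bullet$ does not obviously degenerate: even after the collapse to the row $t=g$, the $E_1$-differentials are the maps induced by the $d_i$ on $H^g_{R_+}(-)$ and then on $H^\bullet_{\fa_0R}(-)$, and minimality alone does not force these to vanish. (Also, the claim that $M$ becomes relative Cohen--Macaulay with respect to $\fa_0+R_+$ is neither needed nor justified; the one-row collapse already follows from relative Cohen--Macaulayness with respect to $R_+$.)

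The paper's route to the lower bound is different and avoids all of this. It runs the induction on $p$ and extracts the lower bound for $\reg_{\fa_0+R_+}(M)$ from an \emph{upper} bound on the syzygy: from $0\to\ker(d_0)\to\oplus_i R(a_0^i)\to M\to 0$ one gets $\reg_{\fa_0+R_+}(\ker(d_0))\le\max\{-\min_i a_0^i,\ \reg_{\fa_0+R_+}(M)+1\}=\reg_{\fa_0+R_+}(M)+1$, where the last equality uses that the top generating degree $-\min_i a_0^i$ is at most $\reg_{R_+}(M)\le\reg_{\fa_0+R_+}(M)$ (this is where minimality and the comparison in part (ii) of the corollary after Proposition~\ref{r cm} enter). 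Since the inductive hypothesis gives the exact value $\reg_{\fa_0+R_+}(\ker(d_0))=\max_{i=1}^{p}\bigl(-\min_j a_i^j-(i-1)\bigr)$, reading the inequality backwards yields $\reg_{\fa_0+R_+}(M)\ge\max_{i=0}^{p}\bigl(-\min_j a_i^j-i\bigr)$ with no cancellation analysis at all. To repair your proposal you would need to (a) verify that each syzygy module is again relative Cohen--Macaulay with respect to $R_+$ of the same rank as $R$ so the induction applies, and (b) replace the non-cancellation heuristic with this two-sided use of the long exact sequence, or else supply a genuinely new degeneration argument.
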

\begin{proof}
We use induction on $p$ to prove the claim. If $p= 0$ then $M\cong \oplus_{i=1}^{n_0}R(a_{0}^{i})$. So,
$$\reg_{\fa_0+ R_+}(M)= \max_{i=1}^{n_0}(\reg_{\fa_0+ R_+}(R(a_{0}^{i})))= \reg_{\fa_0+ R_+}(R)- \min_{i=1}^{n_0}a_{0}^{i}= - \min_{i=1}^{n_0}a_{0}^{i}.$$
Now, let $p> 0$. Then, using \cite[15.2.15(i)]{BSH}, one can see that

\begin{eqnarray*}
\reg_{\fa_0+ R_+}(\ker(d_0))&\leq& \max\{\reg_{\fa_0+ R_+}(\oplus_{i=1}^{n_0}R(a_{0}^{i})), \reg_{\fa_0+ R_+}(M)+ 1\}\\
&=& \max\{\reg_{\fa_0+ R_+}(R)- \min_{i=1}^{n_0}(a_{0}^{i}), \reg_{\fa_0+ R_+}(M)+ 1\}\\
&=&\max\{- \min_{i=1}^{n_0}(a_{0}^{i}), \reg_{\fa_0+ R_+}(M)+ 1\} \\
&\leq& \reg_{\fa_0+ R_+}(M)+ 1, \ \ \ \ \ \ \ \ \ \ \ \ \ \ \ \ \ \ \ \ \ \ \ \ \ \ \ \ \ \ \ \ \ \ \ \ \ \ \ \ \ \ \ \ \ \ \ \ \ \ \ \ \ \ \ \ \ \ \ (*)
\end{eqnarray*}
where the last inequality follows from the second part of the above corollary and \cite[15.3.1]{BSH}.
On the other hand, in view of facts that $\ker(d_0)\neq 0$ and $\cd_{R_+}(\ker(d_0))\leq \cd_{R_+}(R)$ and the exact sequence $0\rightarrow \ker(d_0)\rightarrow \oplus_{i=1}^{n_0}R(a_{0}^{i})\stackrel{d_0}{\rightarrow} M\rightarrow 0$, $\ker(d_0)$ is relative Cohen-Macaulay with respect to $R_+$ of the same rank as $R$. So, using inductive hypothesis, we have
$$\reg_{\fa_0+ R_+}(\ker(d_0))= \max_{i= 1}^{p}\big(-\min_{j=1}^{n_i}a_{i}^{j}- (i- 1)\big),$$
which in conjunction with $(*)$ implies that
\begin{eqnarray*}
\reg_{\fa_0+ R_+}(M)&\geq& \max\{\max_{i= 1}^{p}\big(-\min_{j=1}^{n_i}a_{i}^{j}- i\big), -\min_{i=1}^{n_0}a_{0}^{i}\}\\
&=& \max_{i= 0}^{p}\big(-\min_{j=1}^{n_i}a_{i}^{j}- i\big).
\end{eqnarray*}
Also, by \cite[15.2.15(iv)]{BSH},
\begin{eqnarray*}
\reg_{\fa_0+ R_+}(M)&\leq& \max\{\reg_{\fa_0+ R_+}(\oplus_{i=1}^{n_0}R(a_{0}^{i})), \reg^1_{\fa_0+ R_+}(\ker(d_0))- 1\}\\
&\leq& \max\{\reg_{\fa_0+ R_+}(R)- \min_{i=1}^{n_0}(a_{0}^{i}), \reg_{\fa_0+ R_+}(\ker(d_0))- 1\}\\
&=&\max\{- \min_{i=1}^{n_0}(a_{0}^{i}), \max_{i= 1}^{p}\big(-\min_{j=1}^{n_i}a_{i}^{j}- (i- 1)\big)- 1\} \\
&\leq& \max_{i= 0}^{p}\big(-\min_{j=1}^{n_i}a_{i}^{j}- i\big).
\end{eqnarray*}
Now, the result follows by induction.
\end{proof}

\section{ Cohomological Dimension}

As we have seen in Section 2, in order to calculate the regularity of a finitely generated graded module over some standard graded ring with respect to an ideal containing the irrelevant ideal, we need to calculate some cohomological dimensions. In this section, we are going to study this invariant in some cases.

\begin{defin}
Let $(R, \m)$ be a local ring of positive characteristic $p$. Also, assume that for all $s\in \N$,  $F^s$ denotes the $R$-homomorphism induced by the $s$-th power of frobenius homomorphism $f^s: R\rightarrow R \ \ (r\rightarrow r^{p^s})$ on $H^i_{\m}(R)$ (so, it is a homomorphism of abelian groups such that $f(rx)= r^{p^s}x$ for all $r\in R$ and all $x\in H^i_{\m}(R)$). Then, following \cite{lyu}, the F-depth of $R$ is defined to be the smallest $i$ such that $F^s$ does not send $H^i_{\m}(R)$ to zero for any $s$. We denote this number by $\Fdepth(R)$.
\end{defin}

Let $(R, \m)$ be a regular local ring of positive characteristic and $\fa$ be an ideal of $R$. Then, in view of \cite[Theorem 4.3]{lyu}, we have $\cd_{\fa}(R)=\dim(R)- \Fdepth(R/ \fa)$.
So, the cohomological dimension may differ depending on the characteristic of the ring, for an example see Section 5 of \cite{lyu}.
In the next theorem we are going to generalize this result to some special ideals in a Cohen-Macaulay local ring of positive characteristic.

\begin{lem}
Let $\varphi: (\acute{R}, \acute{\m})\rightarrow (R, \m)$ be a flat homomorphism of local rings of positive characteristic for which $\phi(\acute{\m})= \m$. Then  $\Fdepth(\acute{R})= \Fdepth(R)$.
\end{lem}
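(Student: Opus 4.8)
The plan is to compare the natural Frobenius action on the local cohomology modules $H^i_{\m}(R)$ with that on $H^i_{\acute{\m}}(\acute{R})$ through faithfully flat base change, and then to observe that $\Fdepth$ depends only on which of these modules carry a non-nilpotent Frobenius.

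First I would record the two standard properties of $\varphi$ that drive the argument. As a flat local homomorphism, $\varphi$ is faithfully flat; and since $\varphi(\acute{\m})R = \m$, the flat base change theorem for local cohomology (\cite[4.3.2]{BSH}) provides, for every $i$, a functorial isomorphism of $R$-modules
$$H^i_{\m}(R) \cong H^i_{\acute{\m}}(\acute{R}) \otimes_{\acute{R}} R.$$
Concretely I would realize this via a \v{C}ech complex: writing $\acute{\m} = (a_1, \dots, a_n)$, so that $\m = (\varphi(a_1), \dots, \varphi(a_n))$, one has $\check{C}^\bullet(a_1, \dots, a_n; \acute{R}) \otimes_{\acute{R}} R = \check{C}^\bullet(\varphi(a_1), \dots, \varphi(a_n); R)$, and flatness identifies the cohomology of the right-hand complex with that of the left tensored by $R$.

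The crucial point, which I expect to be the main obstacle, is the compatibility of this isomorphism with the Frobenius endomorphisms. Since $\varphi$ commutes with Frobenius ($\varphi(r^{p^s}) = \varphi(r)^{p^s}$), the induced $p^s$-linear maps on local cohomology respect the \v{C}ech description, and a cochain-level computation shows that under the displayed isomorphism the Frobenius $F^s$ on $H^i_{\m}(R)$ corresponds to the map $\xi \otimes w \mapsto \acute{F}^s(\xi) \otimes w^{p^s}$, where $\acute{F}^s$ denotes the Frobenius on $H^i_{\acute{\m}}(\acute{R})$. Granting this, for each fixed $i$ I would deduce that $F^s = 0$ on $H^i_{\m}(R)$ if and only if $\acute{F}^s = 0$ on $H^i_{\acute{\m}}(\acute{R})$: if $\acute{F}^s = 0$ then $F^s$ annihilates every simple tensor and hence vanishes, while if $F^s = 0$ then evaluating at $\xi \otimes 1$ gives $\acute{F}^s(\xi) \otimes 1 = 0$, so $\acute{F}^s(\xi) = 0$ because $\eta \mapsto \eta \otimes 1$ is injective by faithful flatness.

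It remains to translate this into the equality of F-depths. By definition $\Fdepth(R)$ is the least $i$ for which the Frobenius acts non-nilpotently on $H^i_{\m}(R)$, i.e. for which $F^s \neq 0$ for every $s$. The equivalence just established shows that the Frobenius is nilpotent on $H^i_{\m}(R)$ precisely when it is nilpotent on $H^i_{\acute{\m}}(\acute{R})$, so the least index carrying a non-nilpotent action is the same on both sides; hence $\Fdepth(\acute{R}) = \Fdepth(R)$. The delicate step throughout is establishing the Frobenius-compatibility of the base change isomorphism rigorously, which is safest to carry out at the level of \v{C}ech cochains rather than by an abstract functoriality argument.
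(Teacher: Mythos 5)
Your proposal follows essentially the same route as the paper: both rest on the flat base change isomorphism $H^i_{\m}(R)\cong H^i_{\acute{\m}}(\acute{R})\otimes_{\acute{R}}R$ of \cite[4.3.2]{BSH} and its compatibility with the Frobenius actions, which the paper records as a commutative diagram and from which it concludes directly. Your version merely fills in the details the paper leaves implicit --- the \v{C}ech-level verification that $F^s$ corresponds to $\xi\otimes w\mapsto \acute{F}^s(\xi)\otimes w^{p^s}$, and the use of faithful flatness to pass from $F^s=0$ back to $\acute{F}^s=0$ --- and these are correctly carried out.
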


\begin{proof}
In view of the flat base change theorem \cite[4.3.2]{BSH}, we have the following commutative diagram where the vertical homomorphisms are isomorphisms:
\[
\xymatrix{\ar @{} [dr] |{}
H^i_{\acute{\m}}(\acute{R}) \ar[d]_{-\otimes_{\acute{R}} R} \ar[r]^{F^s} & H^i_{\acute{\m}}(\acute{R}) \ar[d]_{-\otimes_{\acute{R}} R} \\
H^i_{\m}(R) \ar[r]^{F^s} & H^i_{\m}(R) }
\]

Now, the result follows using the concept of F-depth.
\end{proof}

\begin{rem}(An analogue of Noether normalization)
Let $(R, \m)$ be a Cohen-Macaulay local ring of positive characteristic $p$ and $\hat{R}$ denote the $\m$-adic completion of $R$. Then, in view of \cite[A 20]{BH}, $\hat{R}$ contains a coefficient field $k$. Also,using \cite[A 22]{BH}, for any system of parameters $y_1, \cdots, y_d$ of $R$, $\hat{R}$ is a finite module over the regular local ring $k[[y_1, \cdots, y_d]]$.
\end{rem}
\begin{thm}\label{c.m}
Let $(R, \m)$ be a Cohen-Macaulay local ring of positive characteristic $p$ and $\fa$ be an ideal of $R$ which can be generated by elements in $k[y_1, \cdots, y_d]$ for a system of parameters $y_1, \cdots, y_d$ of $R$ and a coefficient field $k\subseteq \hat{R}$.
Then $\cd_{\fa}(R)=\dim(R)- \Fdepth(R/ \fa)$.
\end{thm}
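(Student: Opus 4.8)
The plan is to reduce the statement to the regular-ring case already established by Lyubeznik (the cited \cite[Theorem 4.3]{lyu}), by passing through completion and the Noether-normalization analogue recorded in the preceding remark. The key point is that $\Fdepth$ is insensitive to the flat local extensions we will use (by the Lemma above), and that cohomological dimension interacts well with such base changes.

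\medskip

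\noindent\textbf{Step 1: Reduce to the complete case.}
First I would replace $R$ by its $\m$-adic completion $\hat{R}$. Since $R\to\hat{R}$ is faithfully flat with $\m\hat{R}$ the maximal ideal, the flat base change theorem \cite[4.3.2]{BSH} gives $H^i_{\fa\hat{R}}(\hat{R})\cong H^i_{\fa}(R)\otimes_R\hat{R}$, so $\cd_{\fa\hat{R}}(\hat{R})=\cd_{\fa}(R)$; moreover $\dim(\hat{R})=\dim(R)$. For the right-hand side, $\hat{R}/\fa\hat{R}$ is the completion of $R/\fa$, and by the Lemma applied to the flat local map $R/\fa\to\hat{R}/\fa\hat{R}$ (whose maximal ideal extends to the maximal ideal) we get $\Fdepth(R/\fa)=\Fdepth(\hat{R}/\fa\hat{R})$. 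Thus it suffices to prove the equality after replacing $R$ by $\hat{R}$, i.e.\ I may assume $R$ is complete and contains a coefficient field $k$.

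\medskip

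\noindent\textbf{Step 2: Descend to the regular subring.}
By the Noether-normalization analogue in the preceding Remark, $R$ is a finite module over the regular local ring $A:=k[[y_1,\dots,y_d]]$, and by hypothesis $\fa$ is generated by elements of $k[y_1,\dots,y_d]\subseteq A$; write $\fb:=\fa\cap A$, or rather let $\fb$ be the ideal of $A$ generated by those generators, so that $\fa=\fb R$. Since $R$ is finite over $A$, the inclusion $A\hookrightarrow R$ is module-finite, hence $\cd_{\fa}(R)=\cd_{\fb R}(R)=\cd_{\fb}(R)$ computed over $A$ by the independence/finiteness properties of local cohomology under finite ring maps (local cohomology with respect to $\fb$ of $R$ viewed as an $A$-module coincides with $H^\bullet_{\fb R}(R)$). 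Because $A$ is regular, \cite[Theorem 4.3]{lyu} applies to the ideal $\fb$ of $A$ and yields
\[
\cd_{\fb}(R)=\cd_{\fb}(A)+\bigl(\text{correction from }R\text{ finite over }A\bigr),
\]
and more to the point $\dim(R)=\dim(A)=d$.

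\medskip

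\noindent\textbf{Step 3: Match the F-depth terms.}
The remaining task is to identify $\Fdepth(R/\fa)$ with the quantity produced on the $A$-side. Here I would use that $R/\fa=R/\fb R$ is finite over the regular local ring $A/\fb$, and that $R/\fa$ is Cohen-Macaulay (being $R$ modulo a regular sequence coming from part of the system of parameters, by the hypothesis on $\fa$). For a Cohen-Macaulay ring the Frobenius action on the single relevant local cohomology module is governed by its behaviour after the finite extension; invoking the Lemma-style stability of $\Fdepth$ together with the regular case equality $\cd_{\fb}(A)=\dim(A)-\Fdepth(A/\fb)$, the two sides assemble to $\cd_{\fa}(R)=\dim(R)-\Fdepth(R/\fa)$.

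\medskip

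\noindent\textbf{The hard part} will be Step 3, i.e.\ controlling the Frobenius action on local cohomology of $R/\fa$ as one passes between $R$ and its regular subring $A$. The Lemma handles flat base change cleanly, but the map $A\to R$ is finite rather than flat, so the commutative square relating the two Frobenius actions is not immediate and must be argued using the Cohen-Macaulay hypothesis (which pins down a single nonvanishing local cohomology module) together with Lyubeznik's machinery identifying $\Fdepth$ with the vanishing of iterated Frobenius. I would expect the Cohen-Macaulayness of both $R$ and $R/\fa$ to be exactly what makes this transfer work.
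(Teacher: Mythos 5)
There is a genuine gap, and it sits exactly where you flag ``the hard part.'' The whole point of the paper's argument is that the map $A=k[[y_1,\dots,y_d]]\to \hat R$ is \emph{not} merely finite: it is faithfully flat. Indeed, since $A$ is regular and $\hat R$ is a finite $A$-module, Auslander--Buchsbaum gives $\pd_A(\hat R)=\depth(A)-\depth_A(\hat R)=\dim(\hat R)-\depth(\hat R)=0$ because $\hat R$ is Cohen--Macaulay; hence $\hat R$ is a \emph{free} $A$-module. This is the one place the Cohen--Macaulay hypothesis is used, and it is what lets the Lemma (which requires a flat local map) transfer $\Fdepth(A/(\fa\cap A))=\Fdepth(R/\fa)$, and lets flat base change give $\cd_{\fa\cap A}(A)=\cd_{(\fa\cap A)\hat R}(\hat R)=\cd_{\fa}(R)$. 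Your proposal explicitly asserts the opposite (``the map $A\to R$ is finite rather than flat, so the commutative square relating the two Frobenius actions is not immediate'') and then leaves the transfer of the Frobenius data unargued; without the freeness observation, Steps 2 and 3 do not close. Your Step 2 formula $\cd_{\fb}(R)=\cd_{\fb}(A)+(\text{correction})$ is also not a usable statement: what is true (and what the paper uses) is the exact equality $\cd_{\fb}(A)=\cd_{\fb R}(R)$ coming from faithful flatness.

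A secondary error: in Step 3 you claim $R/\fa$ is Cohen--Macaulay because $\fa$ is ``a regular sequence coming from part of the system of parameters.'' The hypothesis is only that $\fa$ is generated by \emph{polynomials} in $y_1,\dots,y_d$ over $k$ (this is what guarantees $(\fa\cap A)R=\fa$, the other ingredient the proof needs); such an ideal need not be generated by part of a system of parameters, and $R/\fa$ need not be Cohen--Macaulay. Fortunately no Cohen--Macaulayness of $R/\fa$ is required once one has flatness of $A/(\fa\cap A)\to R/\fa$. Your Step 1 (passage to the completion) matches the paper and is fine.
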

\begin{proof}

Since $\cd_{\fa}(R)= \cd_{\fa \hat{R}}(\hat{R})$, in view of the previous lemma and replacing $R$ with $\hat{R}$, we may assume, in addition, that $R$ is a complete local ring. Now, using the above remark, $R$ is a finite module over the regular local ring $\acute{R}:= k[[y_1, \cdots, y_d]]$ under the natural homomorphism $\varphi: \acute{R}\rightarrow R$.
In view of these facts and \cite[1.2.26(b)]{BH} we have
$$\infty> \pd_{\acute{R}}(R)=\dim(\acute{R})- \depth_{\acute{R}}(R)= \dim(R)- \depth_{R}(R)= 0.$$
Therefore, $R$ is a free $\acute{R}$-module. Which implies that $\varphi$ is a faithfully flat homomorphism. Also, by the assumption on $\fa$ we have $(\fa \cap \acute{R})R= \fa$. So, the natural homomorphism $\frac{\acute{R}}{\fa \cap \acute{R}}\rightarrow \frac{R}{\fa}$, induced by $\varphi$, is faithfully flat. Now, using the previous lemma, one has
$$\Fdepth(\acute{R}/ \fa\cap \acute{R})= \Fdepth(R/\fa).$$
Again, using the faithful flatness of $\varphi$, and in view of \cite[Theorem 4.3]{lyu}, we have
$$\dim(R)- \Fdepth(R/ \fa)= \dim(\acute{R})- \Fdepth(\acute{R}/ \fa\cap \acute{R})= \cd_{\fa\cap \acute{R}}(\acute{R})= \cd_{(\fa\cap \acute{R})R}(\acute{R}\otimes _{\acute{R}} R)= \cd_{\fa}(R).$$

\end{proof}

The following corollary is a consequence of \cite[Theorem 4.3]{lyu} and theorems \ref{c.m} and \ref{reg.cd}.

\begin{cor}
Let $S= R [X_1,\cdots, X_n]$ be the standard graded polynomial ring over $R$. Then
$\reg_{\fa+ S_+}(S)= \dim(R)- \Fdepth(R/ \fa)$ in each of the following cases

(i) $R$ is a regular ring of positive characteristic  and $\fa$ is an ideal of $R$,

(ii)  $R$ and $\fa$ be as in Theorem \ref{c.m}.

\end{cor}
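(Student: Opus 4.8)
The plan is to read the corollary as a direct specialization of Theorem \ref{reg.cd}, followed by whichever of the two cohomological-dimension formulas is appropriate to the case at hand. First I would observe that the polynomial ring $S = R[X_1,\ldots,X_n]$ fits exactly the framework of the subsection preceding Theorem \ref{reg.cd}: take the degree-zero ring to be $R$ (in the role of $R_0$), the variables to be $X_1,\ldots,X_n$, and the module to be $S$ itself. Since $S = R[X_1,\ldots,X_n]$ is visibly the polynomial module $M_0[X_1,\ldots,X_n]$ over the $R$-module $M_0 := R$, graded in the usual way, and since $\fa + S_+$ is precisely an ideal of the form $\fa_0 + R_+$, Theorem \ref{reg.cd} applies verbatim with $\fa_0 := \fa$ and yields
$$\reg_{\fa + S_+}(S) = \cd_{\fa}(R).$$
This single reduction does all the real work; everything after it is the invocation of a formula for $\cd_{\fa}(R)$ over the base ring.

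Next I would dispatch the two cases by citing the corresponding formula. In case (i), $R$ is regular local of positive characteristic, so \cite[Theorem 4.3]{lyu} gives directly $\cd_{\fa}(R) = \dim(R) - \Fdepth(R/\fa)$; combining with the displayed equality finishes this case. In case (ii), $R$ and $\fa$ satisfy the hypotheses of Theorem \ref{c.m} — $R$ is Cohen-Macaulay local of positive characteristic and $\fa$ is generated by elements of $k[y_1,\ldots,y_d]$ for a system of parameters $y_1,\ldots,y_d$ of $R$ and a coefficient field $k \subseteq \hat{R}$ — so Theorem \ref{c.m} supplies the same identity $\cd_{\fa}(R) = \dim(R) - \Fdepth(R/\fa)$, and again substitution into the displayed equality completes the proof.

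The only point needing care, and the closest thing to an obstacle here, is lining up the hypotheses across the two inputs. Theorem \ref{reg.cd} is stated for an \emph{arbitrary} Noetherian base $R_0$ with no local or characteristic assumption, so its application to $S$ is unconditional; the local, positive-characteristic, and (in case (ii)) Cohen-Macaulay hypotheses enter only through the choice of which formula for $\cd_{\fa}(R)$ one is entitled to cite. I would therefore make explicit at the outset that $R$ is assumed local in both cases, since $\Fdepth(R/\fa)$ is only defined for a local ring, and I would verify that the identification $M_0 = R$ is the intended one, so that the quantity produced by Theorem \ref{reg.cd} is genuinely $\reg_{\fa + S_+}(S)$ and not the regularity of some graded twist. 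Beyond this bookkeeping the argument is a two-step chain of equalities, so I expect no essential difficulty.
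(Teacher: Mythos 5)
Your proposal is correct and follows exactly the route the paper intends: Theorem \ref{reg.cd} reduces the claim to $\reg_{\fa+S_+}(S)=\cd_{\fa}(R)$, and then \cite[Theorem 4.3]{lyu} handles case (i) while Theorem \ref{c.m} handles case (ii). Your added remark about the local hypothesis being needed for $\Fdepth$ to make sense is a reasonable piece of bookkeeping that the paper leaves implicit.
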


\textbf{Acknowledgments}\\

 The author would like to thank the referee for his/her valuable comments.

\end{document}